\newif\ifPDF
\newtheorem{theorem}{Theorem}[section]
\newtheorem{lemma}[theorem]{Lemma}
\newtheorem{definition}[theorem]{Definition}
\newcommand{\supp}{\operatorname{supp}}
\newcommand{\dsum}{\displaystyle\sum}
\newcommand{\dmin}{\displaystyle\min}
\newcommand{\eps}{\varepsilon}
\newcommand{\fc}{\mathfrak{c}}
\newcommand{\fu}{\mathfrak{u}}
\newcommand{\fii}{\mathfrak{i}}
\newcommand{\bbC}{\mathbb C}
\newcommand{\bbR}{\mathbb R}
\newcommand{\bnu}{{\boldsymbol \nu}}
 \newcommand{\bn}{\mathbf n}
 \newcommand{\bx}{\mathbf x} 
\newcommand{\by}{\mathbf y}
 \newcommand{\cB}{\mathcal B}
\newcommand{\cC}{\mathcal C}  
 \newcommand{\cH}{\mathcal H}
\newcommand{\cS}{\mathcal S} 
\newcommand{\cU}{\mathcal U}
\newcommand{\wt}{\widetilde}
\newcommand{\wh}{\widehat}
\newenvironment{keywords}
{\noindent{\bf Key words.}\small}{\par\vspace{1ex}}
\newenvironment{AMS}
{\noindent{\bf AMS subject classifications 2010.}\small}{\par}
\newcommand{\fs}{\mathfrak{s}}
\title{Imaging point sources in heterogeneous environments}
\author{
	Kui Ren\thanks{
		Department of Applied Physics and Applied Mathematics, Columbia University, New York, NY 10027;
		\href{mailto:kr2002@columbia.edu}{kr2002@columbia.edu}
	}
	\and
	Yimin Zhong\thanks{
		Department of Mathematics,
		University of California, Irvine, CA 92697-3875;
		\href{mailto:yiminz@uci.edu }{yiminz@uci.edu}
	}
}
\begin{document}

\maketitle



\begin{abstract}
Imaging point sources in heterogeneous environments from boundary or far-field measurements has been extensively studied in the past. In most existing results, the environment, represented by the refractive index function in the model equation, is assumed \emph{known} in the imaging process. In this work, we investigate the impact of environment uncertainty on the reconstruction of point sources inside it. Following the techniques developed by El Badia and El Hajj (\emph{C. R. Acad. Sci. Paris, Ser. I, 350 (2012), 1031-1035}), we derive stability of reconstructing point sources in heterogeneous media with respect to measurement error as well as smooth changes in the environment, that is, the refractive index. Numerical simulations with synthetic data are presented to further explore the derived stability properties.
\end{abstract}


\begin{keywords}
	Inverse source problems, Helmholtz equation, point sources, stability estimates, numerical reconstructions, uncertainty characterization, inverse problems
\end{keywords}


\begin{AMS}
	15A29, 35R30, 49N45, 65N21, 78A46.
\end{AMS}



\section{Introduction}
\label{SEC:Intro}

Recovering radiative sources inside heterogeneous media from boundary or far-field measurements has applications in many branches of science and technology~\cite{AmBaFl-SIAM02,Baltes-Book78,BaLiTr-JDE10,BaBeBeLe-IP05,CaEw-IPBVP75,Chin-ACM10,ElVa-IP09,GaOsTa-JIIPP13,HoCl-MB97,Ikehata-IP99,ImYa-IP98,IsLu-IPI18,JiLiYa-JDE17,Larsen-JQSRT73,MaTs-IP13,Nicaise-SIAM00,PuYa-IP96,Sezginer-IP87,Siewart-JQSRT93,Yamamoto-IP95}. Extensive mathematical and computational studies of such inverse source problems have been performed in the past decades; see, for instance, ~\cite{AnBuEr-Book97,Isakov-Book90,Isakov-Book06} and references therein for recent reviews on the subject. 

In this work, we are interested in a source recovery problem where the source to be reconstructed is the superposition of point sources~\cite{BaBeBeLe-IP05,CaEw-IPBVP75,ElBadia-IP05,ElHa-JIIPP02,ElEl-CRASP12,ElNa-IP11,FaHaEs-IPI13,GaZhCoWa-BOE10,KaLe-IP04,KoYa-IP02,LiTa-CiCP09,MaTs-IP13,OhInOh-IP11,Vessella-IP92}. Unlike general source functions, point sources are efficiently characterized by their locations and strengths, a fact that significantly reduces the dimension of the parameter space of the inverse problems. This dimension reduction often enables one to obtain uniqueness in the inverse problem with minimum amount of observed data and provides the possibility of utilizing efficient reconstruction algorithms, for instance these based on compressive sensing~\cite{ChMoPa-IP12,FaStYa-SIAM10}, in the source recovery process.


To formulate our problem, let $\Omega\subset \bbR^d$ ($d\ge 3$) be a simply connected domain with $\cC^2$ boundary $\partial\Omega$. Let $u(\bx)$ be the solution to the following boundary value problem to the Helmholtz equation:
\begin{equation}\label{EQ:Helmholtz}
\begin{array}{rcll}
\Delta u + k^2 \big(1+n(\bx)\big) u  &=& q(\bx), &\mbox{in}\ \ \Omega\\
	u &=& f(\bx), & \mbox{on}\  \partial\Omega
\end{array}
\end{equation}
where the real-valued function $n(\bx)$ is the refractive index, $q(\bx)$ and $f(\bx)$ are internal and boundary source terms respectively. We assume that $n(\bx)$ has a compact support in $\Omega$, that is, $\supp(n) \subset\subset \Omega$, and that $1+n(\bx)>0$, $\forall \bx\in\Omega$. We assume that $0$ is not an eigenvalue of the operator $\Delta + k^2(1+n)$ with homogeneous Dirichlet boundary condition such that the problem~\eqref{EQ:Helmholtz} admits a unique solution for given source functions $q$ and $f$.

We assume that the internal source function $q(\bx)$ is a superposition of $m$ point sources located at $\{\bx_j\}_{j=1}^m$ with strengths $\{\lambda_j\}_{j=1}^m$, that is,
\begin{equation}\label{EQ:Source}
	q(x) = \sum_{j=1}^m \lambda_j \delta(\bx-\bx_j).
\end{equation}
The strengths $\{\lambda_j\}$ are all assumed to be real-valued so that there is no physical absorption occurring at the point sources. 

The Helmholtz equation~\eqref{EQ:Helmholtz} can be viewed as a simplified frequency-domain model for either electromagnetic or ultrasound wave propagation, depending on the value of the parameters, mainly the wavenumber $k$, in the equation. The mathematical derivations in the rest of the paper implicitly assume that the wavenumber $k$ is real-valued and $k>0$. We believe, however, the same types of calculations can be carried out for the zero-frequency case ($k=0$) as those studied in ~\cite{BaBeBeLe-IP05,ElBadia-IP05} or in~\cite{WaLiJi-MP05} with an extra absorption term.

Let us also mention that since linearizing inverse coefficient problems often results in inverse source problems, the point source reconstruction problem we study in this paper is closely related to the problem of reconstructing small volume inclusions in background media~\cite{AmKa-IP03,AmMoVo-ESAIM03} and the problem of imaging small scatterers in complex media~\cite{BoPaTs-IP03,BoPaTs-IP05,ChMoPa-IP16,DeMaGr-JASA05}. The main difference is that in our derivation below, we can utilize explicitly the fact that point sources are singular. The secondary sources created by small scatterers or inclusions, however, do not carry the same level of singularity of  the point sources.

We are interested in the problem of reconstructing the point sources, i.e. their locations and strengths, from Cauchy data $(f, g)$ where the boundary measurement $g(\bx)$ is given as
\begin{equation}\label{EQ:Helmholtz Data}
	g(\bx)=\left( {\dfrac{\partial u}{\partial \nu}}\right){\bigg|}_{\partial\Omega} \equiv \bnu\cdot\nabla u|_{\partial\Omega},
\end{equation}
$\bnu(\bx)$ being the unit outer normal vector of the domain boundary at $\bx\in\partial\Omega$.

To our best knowledge, in all the previous work on point source recovery, the environment, that is, the refractive index $n(\bx)$ in our formulation, in which point sources (or point source like localized objects), are to be sought is assumed to be known exactly, with the only exception in ~\cite{BuLaTaTs-AFR09} where the authors tried to reconstruct point sources in homogeneous media with unknown impenetrable obstacles. In other words, in the environment under which the data are collected is the same as the environment that data are back-propagated to reconstruct the point sources. Moreover, the mathematical works where stability of source reconstructions are derived with respect to noise in measured data are all done under the assumption that the environment is homogeneous (so that one could have access to the explicit form of the associated Green's function).

In the rest of the paper, we prove a stability result, following the techniques developed by El Badia and El Hajj (\emph{C. R. Acad. Sci. Paris, Ser. I, 350 (2012), 1031-1035})~\cite{ElEl-CRASP12}, on the recovering of point sources in smooth \emph{inhomogeneous} environment, that is, when the refractive index $n(\bx)$ varies smoothly in space. We also prove the stability of point source reconstruction with respect to smooth changes in the environment: if we perform reconstructions in a medium that is only slightly, in appropriate sense, different than the medium from which we collected the data, then the reconstructions are only slightly different from the reconstructions in the exact medium that generated the data.


\section{Main results}
\label{SEC:Results}

We now present the main result of this short paper: (i) the stability of reconstructing point sources in an heterogeneous medium; and (ii) the stability of the reconstructions with respect to smooth changes of the medium.

\subsection{Stability in heterogeneous media}
\label{SUBSEC:Stab Hetero Media}

We first consider the case where the underlying medium is known but heterogeneous, that is, we have a known but spatially varying refractive index $n(\bx)$. Under this circumstance, we can uniquely reconstruct the source $q(\bx)$ from a single pair of Cauchy data $(f, g)$ with a given H\"older type of stability. This is a generalization of the stability results of El Badia and El Hajj established in~\cite{ElEl-CRASP12,ElNa-IP11}.

We make the following general assumptions on the setup of the problem.

\noindent{\bf (A).} The domain $\Omega$ has a $\cC^2$ boundary $\partial\Omega$. The refractive index $n(\bx)$ is real-valued and smooth, with support $\supp(n)\subset\Omega$. The point sources are well separated in the sense that $\dmin_{i\neq j}|\bx_i-\bx_j|\ge \fc>0$ for some $\fc$. The point sources are sufficiently far away from the boundary of the domain such that ${\rm dist}(\bx_j,\partial\Omega)\ge \wt\fc>0$, $\forall j$, for some $\wt\fc$. The strengths of the point sources satisfy $0<\underline{\lambda} \le \lambda_j\le \overline{\lambda} <+\infty$, $\forall j$, for some $\underline{\lambda}$ and $ \overline{\lambda}$. The illumination boundary source $f$ is the restriction of a $\cC^\infty$ function to $\partial\Omega$.

It will be clear that the smoothness assumptions on the refractive index $n(\bx)$ and the boundary source $f$ are not completely necessary. In fact, being $\cC^3$ is sufficient for all the results to hold. It should also be noted that the assumptions on the point sources imply that $\bx_i\neq \bx_j$ whenever $i\neq j$, a fact that is implicitly used later when we study uniqueness of reconstructions.

%
\begin{lemma}\label{LEM:Uniqueness Known Media}
	Let 
	\begin{equation}\label{EQ:Source12}
			q_\ell = \sum_{j=1}^{m_\ell} \lambda_{\ell, j} \delta(\bx - \bx_{\ell, j}), \quad \ell=1,2
	\end{equation}
	be two sets of point sources satisfying the assumptions in~{\bf (A)}, $u_\ell$ ($\ell=1, 2$) the corresponding solutions to the Helmholtz equation~\eqref{EQ:Helmholtz}. Then $(f_1, g_1)=(f_2, g_2)$ implies that $m_1 = m_2$ $(:= m)$ and
	\[
		\quad \bx_{1,j} = \bx_{2,\pi(j)},\quad \lambda_{1,j} = \lambda_{2, \pi(j)},\ \ 1\le j\le m
	\]
	for some permutation $\pi \in {\rm Sym(m)}$.
\end{lemma}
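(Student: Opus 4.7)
The plan is to reduce the uniqueness statement to an analysis of the difference $w := u_1 - u_2$: first force $w$ to vanish off the union $S$ of all candidate source sites via unique continuation from the boundary, then match individual source locations and strengths by comparing the pointwise singularities of $u_1$ and $u_2$ at those sites.

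I would begin by observing that $w$ satisfies $(\Delta + k^2(1+n))w = q_1 - q_2$ in $\Omega$ with vanishing Dirichlet and Neumann traces on $\partial\Omega$ (since $(f_1,g_1) = (f_2,g_2)$). Setting $S := \{\bx_{1,j}\}_{j=1}^{m_1} \cup \{\bx_{2,j}\}_{j=1}^{m_2}$, assumption {\bf (A)} guarantees $\operatorname{dist}(S,\partial\Omega) \geq \wt{\fc} > 0$, so in the collar $U := \{\bx \in \Omega : \operatorname{dist}(\bx,\partial\Omega) < \wt{\fc}\}$ the right-hand side vanishes and $w$ is a Cauchy-null solution of a smooth-coefficient second-order elliptic equation; the standard unique continuation from boundary Cauchy data then yields $w \equiv 0$ on $U$. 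Since removing finitely many points from the connected open set $\Omega$ leaves it connected, and $w$ solves the homogeneous equation on $\Omega \setminus S$ with $\cC^\infty$ coefficients, classical Aronszajn-type propagation of zeros extends the null set from $U$ to all of $\Omega \setminus S$.

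For the concluding step I would analyze the pointwise singularity of $u_\ell$ at each candidate source. Near any $\bx_{1,j}$, standard elliptic local theory gives the expansion
$$
u_1(\bx) = \lambda_{1,j}\,\Phi(\bx - \bx_{1,j}) + r_1(\bx),
$$
where $\Phi(\by) = -c_d\, |\by|^{2-d}$ is the fundamental solution of the principal part $\Delta$ (the assumption $d \geq 3$ enters only through the form of $\Phi$) and $r_1$ is continuous in a neighborhood of $\bx_{1,j}$. If $\bx_{1,j}$ were not among the $\bx_{2,k}$, then $u_2$ would be smooth near $\bx_{1,j}$ and $w = u_1 - u_2$ would inherit the same nonzero leading singularity $\lambda_{1,j}\Phi$, contradicting $w \equiv 0$ on $\Omega \setminus S$ because $\lambda_{1,j} \geq \underline{\lambda} > 0$. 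Swapping the roles of the two configurations yields $\{\bx_{1,j}\} = \{\bx_{2,k}\}$, hence $m_1 = m_2 =: m$ and the permutation $\pi$ exists; comparing leading coefficients at each matched pair then forces $\lambda_{1,j} = \lambda_{2,\pi(j)}$.

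The step I expect to demand the most care is the propagation-of-zeros argument across the punctured set $\Omega \setminus S$: one must confirm that the isolated singularities of $w$ at the finitely many source sites are mild enough not to obstruct a connected unique-continuation argument. Because the pointwise singularities are of order $|\bx - \bx_{\ell,j}|^{2-d}$ and hence locally integrable when $d \geq 3$, the function $w$ lies in $L^1_{\operatorname{loc}}(\Omega)$ and classical weak unique continuation on the connected set $\Omega \setminus S$ applies directly, with no need to handle the punctures separately. The local expansion used in the singularity comparison is itself routine: subtract a cutoff multiple of the Newtonian potential and apply interior elliptic regularity to the smoother remainder.
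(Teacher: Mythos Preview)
Your proof is correct and follows essentially the same route as the paper: both set $w=u_1-u_2$, use unique continuation from the vanishing Cauchy data on $\partial\Omega$ to force $w\equiv 0$ on $\Omega\setminus S$, and then conclude $q_1=q_2$. The only cosmetic difference is in the final step: the paper argues distributionally (a locally integrable $w$ supported on the finite set $S$ must vanish, hence $q_1-q_2=(\Delta+k^2(1+n))w=0$), whereas you reach the same conclusion by explicitly comparing the leading $|\bx-\bx_{\ell,j}|^{2-d}$ singularities of $u_1$ and $u_2$ at each site.
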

\begin{proof}
	This result follows from the unique continuation principle for Cauchy problems of elliptic equations. Let $D_\eps^{\ell, j}:=\cB(\bx_{\ell, j}, \eps)$ be the disk centered at $\bx_{\ell, j}$ with radius $\eps > 0$ small enough such that $D_\eps^{\ell, j}\subset \Omega$. We define $w = u_1 - u_2$ and verify that $w$ solves
	\begin{equation}
		\begin{array}{cl}
		\Delta w + k^2 (1+n) w = 0, & \mbox{in}\ \Omega_{\eps} := \Omega\backslash \bigcup_{\ell, j} D_\eps^{\ell, j}\\
		\\
		w = 0, \quad \partial_{\nu} w = 0, & \mbox{on}\ \partial\Omega
		\end{array}
	\end{equation} 
	Since $\partial\Omega \subset \partial \Omega_{\eps}$, we then conclude from the unique continuation principle~\cite[Theorem 3.3.1]{Isakov-Book06} that $w(\bx) = 0$, $\forall \bx\in\Omega_{\eps}$ with any $\eps > 0$. If we take $\eps\to 0$, this implies that $w = 0$ except at the locations $\bx_{\ell, j}$. Therefore $w$ must be a finite linear combination of point sources and their derivatives. This is impossible. Therefore $q_1 = q_2$ up to a possible permutation $\pi$, that is, renumbering of the point sources.
\end{proof}

%

We now study the stability of the reconstruction. Following~\cite{ElEl-CRASP12,ElNa-IP11}, we look at the stability issue for an algebraic reconstruction technique that is based on the projection of the point sources into planes in $\bbR^3$. Due to the fact that the medium is heterogeneous, we need to find good ways to do the projection. In the next two lemmas, we introduce our method of projection onto surfaces determined by the solutions of the Helmholtz equation (which are controlled by the medium).

\begin{lemma}\label{LEM:FRAME}
	Under the assumptions in~{\bf(A)} on the refractive index and $\Omega$, there exists a complex-valued function $\phi(\bx)$ and a constant $\mu>0$ such that $\phi(\bx)$ solves
	\begin{equation}\label{EQ:Schrodinger}
	\Delta \phi + k^2(1 + n) \phi = 0, \ \ \mbox{in}\ \ \Omega
	\end{equation}
	$|\nabla \phi(\bx)|\neq 0$, $\forall \bx\in\Omega$, and 
	\[
		\sup_{\bx\in\Omega}(|\phi(\bx)| + |\nabla \phi(\bx)|) < \mu \inf_{\bx\in\Omega} |\phi(\bx)|.
	\]
\end{lemma}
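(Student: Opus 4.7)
The plan is to construct $\phi$ as a complex geometric optics (CGO) solution. I seek $\phi$ in the form
\[
\phi(\bx) = e^{i\zeta \cdot \bx}\bigl(1+\psi(\bx)\bigr),
\]
where $\zeta \in \bbC^d$ is a complex vector satisfying $\zeta \cdot \zeta = k^2$. Splitting $\zeta = \zeta_{\mathrm R}+i\zeta_{\mathrm I}$ into real and imaginary parts, this requirement becomes $\zeta_{\mathrm R}\cdot\zeta_{\mathrm I}=0$ and $|\zeta_{\mathrm R}|^2-|\zeta_{\mathrm I}|^2=k^2$, which in particular allows $|\zeta|$ to be taken arbitrarily large. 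After extending $n$ by zero outside $\Omega$ (valid by the compact support hypothesis in~{\bf(A)}), a direct substitution reduces~\eqref{EQ:Schrodinger} to the equation
\[
\Delta \psi + 2i \zeta \cdot \nabla \psi = -k^2\, n\,(1+\psi) \quad \text{in}\ \ \bbR^d
\]
for the unknown remainder $\psi$.

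The main step is to solve this equation with $\psi$ small. Following the classical CGO constructions of Sylvester--Uhlmann and H\"ahner, I invert the conjugated Laplacian $\Delta + 2i\zeta \cdot \nabla$ using Faddeev's Green function $G_\zeta$, recasting the PDE as the fixed-point problem $\psi = -k^2\, G_\zeta\bigl(n(1+\psi)\bigr)$. The standard weighted estimate $\|G_\zeta f\|_{L^2_\delta(\bbR^d)} \le (C/|\zeta|)\,\|f\|_{L^2_{\delta+1}(\bbR^d)}$ implies, for $|\zeta|$ large enough in terms of $k$ and $\|n\|_\infty$, that this map is a contraction on a small ball and produces a unique solution with $\|\psi\|_{L^2(\Omega)} \le C/|\zeta|$. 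Interior elliptic regularity together with the $\cC^3$-smoothness of $n$ (which is all that is really needed, as pointed out after~{\bf(A)}) then bootstraps to $\|\psi\|_{L^\infty(\Omega)} \le C/|\zeta|$ and to a uniform bound $\|\nabla \psi\|_{L^\infty(\Omega)} \le C$ independent of $|\zeta|$.

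With $\psi$ in hand, I fix $|\zeta|$ large enough that $\|\psi\|_\infty\le\tfrac12$ and $|\zeta|>4C$, and verify the three required conclusions. Equation~\eqref{EQ:Schrodinger} in $\Omega$ holds by construction. From $|\phi(\bx)|=e^{-\zeta_{\mathrm I}\cdot\bx}|1+\psi(\bx)|\ge \tfrac12 e^{-\zeta_{\mathrm I}\cdot\bx}$ on $\bar\Omega$ we get $\inf_\Omega|\phi|>0$, and in particular $\phi$ is nowhere zero. From $\nabla\phi = e^{i\zeta\cdot\bx}\bigl(i\zeta(1+\psi)+\nabla\psi\bigr)$ one obtains $|\nabla\phi(\bx)|\ge e^{-\zeta_{\mathrm I}\cdot\bx}\bigl(\tfrac12|\zeta|-C\bigr)>0$ on $\bar\Omega$, so $|\nabla\phi|$ is bounded away from zero as well. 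Continuity of $|\phi|$ and $|\nabla\phi|$ on the compact set $\bar\Omega$ supplies finite upper bounds, and any $\mu$ exceeding the resulting finite ratio $\sup_\Omega(|\phi|+|\nabla\phi|)/\inf_\Omega|\phi|$ works.

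The main obstacle is the first step: producing $\psi$ with both the small-norm and gradient controls used above. Pure existence of CGO solutions is by now classical, but passing from the baseline weighted-$L^2$ CGO bound to the pointwise control of $\psi$ and $\nabla\psi$ required in the verification uses the smoothness of $n$ and a standard bootstrapping argument. Once these ingredients are in place, the rest of the proof reduces to the triangle inequality and the compactness of $\bar\Omega$.
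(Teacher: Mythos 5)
Your proposal is correct and takes essentially the same route as the paper: both choose $\phi$ to be a CGO solution $e^{i\zeta\cdot\bx}(1+r)$ with $\zeta\cdot\zeta=k^2$ and $|\zeta|$ large, make the remainder small in a norm that controls $\phi$ and $\nabla\phi$ pointwise, and then obtain the nonvanishing of $|\phi|$ and $|\nabla\phi|$ and the existence of $\mu$ from boundedness of $\Omega$; the paper simply quotes the estimate $\|r\|_{\cH^3(\Omega)}\le \fc\, k^2\|n\|_{\cH^3(\Omega)}/|\zeta|$ from Nagayasu--Uhlmann--Wang and applies Sobolev embedding to get $\cC^{1,1/2}$ control, rather than redoing the Faddeev/contraction construction. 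One small caution: the uniform bound on $\|\nabla\psi\|_{L^\infty}$ is better justified not by plain interior elliptic regularity (whose constants degrade because of the $|\zeta|$-sized drift term $2i\zeta\cdot\nabla\psi$) but by applying the same weighted $G_\zeta$ estimate to derivatives of the conjugated equation, using $n\in\cC^3$ with compact support --- which is precisely how the cited $\cH^3$ bound, and hence the paper's shortcut, is obtained.
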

\begin{proof}
With the regularity of $n(\bx)$ assumed in~{\bf(A)}, we can take $\phi$ as the well-known complex geometrical optics (CGO) solution to~\eqref{EQ:Schrodinger}; see for instance~\cite{NaUhWa-IP13}. More precisely, let $\zeta = \eta + \fii\xi$ with $\eta\in\bbR^3$ and $\xi \in \bbR^3$ given vectors such that $\eta \cdot \xi = 0$ and $\zeta\cdot\zeta=k^2$ (i.e. $|\eta|^2-|\xi|^2=k^2$). It is shown in~\cite{NaUhWa-IP13} that~\eqref{EQ:Schrodinger} has a solution of the form
	\begin{equation*}
	\phi(\bx) = e^{i\zeta\cdot \bx} (1 + r(\bx)),\;\text{ with }\; \|r\|_{\cH^3(\Omega)} \le \fc \frac{k^2}{|\zeta|} \|n\|_{\cH^3(\Omega)}
	\end{equation*}
	when $|\zeta|$ is sufficiently large. Then by the Sobolev embedding theorem~\cite{AdFo-Book03}, $r\in \cC^{1,1/2}(\Omega)$. If we choose $|\zeta|$ large such that $\|r\|_{\cC^{1,1/2}(\Omega)} < 1/2$, then we will have $|\nabla \phi|\neq 0$ for all $\bx\in \Omega$ and $\phi\in \cC^{1,1/2}(\Omega)$. Therefore we can find a constant $\mu > 0$ such that
	\begin{equation}
	\sup_{\bx\in\Omega}(|\phi(\bx)| + |\nabla \phi(\bx)|) < \mu \inf_{\bx\in\Omega} |\phi(\bx)|.
	\end{equation}
	This completes the proof.
\end{proof}
%
\begin{definition}\label{DEF:PROJ}
The function $\phi$ introduced in Lemma~\ref{LEM:FRAME} defines a local frame $(\mathbf{e}_1,\mathbf{e}_2,\mathbf{e}_3)$ on $\Omega$, where $\mathbf{e}_1 = \frac{\nabla \phi}{|\nabla \phi|}\in \bbC^3$, and $[\mathbf{e}_1,\mathbf{e}_2,\mathbf{e}_3]$ forms an unitary matrix, which also determines a local coordinate change $\mathcal{U}: \mathbb{R}^3\rightarrow \mathbb{C}^3$, denoted by $\mathcal{U}(\bx) = (\fu_1(\bx), \fu_2(\bx), \fu_3(\bx))$, from the Cartesian coordinate to the local frame. We define a projection $\cS:\bbC^3\rightarrow \mathbb{C}$ as:
\begin{equation*}
\cS(\bx) = \overline{(\fu_2 + \fii \fu_3)}.
\end{equation*}
This projection defines a pseudo distance function $\text{\rm dist}_{\cS}(\bx, \by) = |\cS(\bx) - \cS(\by)|$ and 
the diameter of $\Omega$ under the projection $\cS$ is denoted by ${\rm diam}_{\cS}(\Omega) := \displaystyle\sup_{\bx, \by\in\Omega} {\rm dist}_{\cS}(\bx, \by)$.
\end{definition}
The project $\cS$ we defined here is clearly not unique in the sense that one can rotate the coordinates to use $(\fu_3,\fu_1)$ or $(\fu_1,\fu_2)$ to replace $\fu_2$ and $\fu_3$. Moreover, since $\phi$ can be chosen differently using the complex vector $\zeta$ (which controls the boundary condition needed), we can construct the $\cS$ that we need by selection a specific vector. 

The projection we just introduced allows us to construct the following test functions.
\begin{lemma}\label{LEM:Test Func}
Let $\{ \bx_j \}_{j=1}^m \subset \Omega$ be arbitrary distinct points. Then under assumption~{\bf(A)}, there exists a function $\psi(\bx)$ solving
\begin{equation}\label{EQ:Test Func}
	\Delta \psi + k^2(1+n) \psi = 0, \ \ \mbox{in}\ \ \Omega
\end{equation}
such that 
\begin{equation}\label{EQ:Test Func Zero}
	\psi(\bx_j) = 0, \qquad 1\le j\le m.
\end{equation}
\end{lemma}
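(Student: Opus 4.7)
The plan is to construct $\psi$ as a linear combination of a one-complex-parameter family of CGO-type solutions to~\eqref{EQ:Test Func}. Building on Lemma~\ref{LEM:FRAME}, I would first pick a complex null direction $\bw\in\bbC^3$ (compatible with the frame underlying $\cS$) satisfying $\bw\cdot\bw=0$ and $\bw\cdot\zeta=0$; such a $\bw$ can be assembled from $\zeta$ and the real unit vector orthogonal to both $\eta$ and $\xi$. Then $\zeta_\alpha:=\zeta+\alpha\bw$ satisfies $\zeta_\alpha\cdot\zeta_\alpha=k^2$ for every $\alpha\in\bbC$, and for $|\zeta|$ sufficiently large the associated CGO solution
\[
\phi_\alpha(\bx) = e^{\fii\zeta_\alpha\cdot\bx}\,\bigl(1+r_\alpha(\bx)\bigr)
\]
of~\eqref{EQ:Test Func} exists with $\|r_\alpha\|_{\cC^{1,1/2}(\Omega)}$ uniformly small on any bounded $\alpha$-neighborhood. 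The leading factor splits as $e^{\fii\zeta\cdot\bx}\,e^{\fii\alpha\bw\cdot\bx}$, so $\phi_\alpha(\bx_j)$ depends on $\alpha$ essentially through the quantity $s_j:=\fii\,\bw\cdot\bx_j$, which after aligning the frame with $\cS$ can be arranged to yield pairwise distinct values for the given points $\bx_j$.

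Next I would pick $m+1$ distinct complex numbers $\alpha_0,\dots,\alpha_m$ and form the $m\times(m+1)$ evaluation matrix $A_{jk}=\phi_{\alpha_k}(\bx_j)$. Since $A$ has fewer rows than columns, $\ker A$ is at least one-dimensional, so I may select a nonzero vector $(c_0,\dots,c_m)\in\ker A$ and set $\psi:=\sum_{k=0}^{m} c_k\,\phi_{\alpha_k}$. By linearity $\psi$ solves~\eqref{EQ:Test Func}, and by construction $\psi(\bx_j)=0$ for $j=1,\dots,m$, giving the required interpolation modulo the issue of whether the constructed $\psi$ is genuinely nonzero.

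The main obstacle is precisely this nontriviality, which reduces to linear independence of $\phi_{\alpha_0},\dots,\phi_{\alpha_m}$ on $\Omega$. For the leading factors $e^{\fii\alpha_k\bw\cdot\bx}$, independence holds because $\bw\cdot\bx$ is a non-constant complex-linear function on the open set $\Omega$ and distinct exponentials are linearly independent on any non-degenerate range. A standard perturbation argument, using the smallness bound on $r_\alpha$ inherited from Lemma~\ref{LEM:FRAME} once $|\zeta|$ is taken large enough, transfers this independence to the full CGO family $\phi_{\alpha_k}$. Combined with the rank-nullity construction above, this gives $\psi\not\equiv 0$ satisfying both~\eqref{EQ:Test Func} and~\eqref{EQ:Test Func Zero}, completing the proof.
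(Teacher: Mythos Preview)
Your approach is essentially correct but takes a genuinely different route from the paper. The paper constructs $\psi$ \emph{explicitly} as
\[
\psi(\bx)=\phi(\bx)\prod_{j=1}^m\bigl(\cS(\bx)-\cS(\bx_j)\bigr),
\]
and then checks that $\Delta\cS=0$ and $\nabla\phi\cdot\nabla\cS=0$, so the product still solves~\eqref{EQ:Test Func}. Your construction is additive rather than multiplicative: you build a finite-dimensional family of CGO solutions $\phi_{\alpha_k}$ and pick a nontrivial element of the kernel of the evaluation map at the $\bx_j$. Both arguments rest on the same CGO input from Lemma~\ref{LEM:FRAME}, but yours is an abstract dimension count, whereas the paper's is a closed-form formula.

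Two remarks. First, your ``standard perturbation argument'' for linear independence is the only place that needs tightening: linear independence is not automatically stable under small perturbations, so you should fix the $\alpha_k$ first, choose test points $\by_0,\dots,\by_m\in\Omega$ where the Vandermonde-type matrix $[e^{\fii\alpha_k\bw\cdot\by_l}]$ is nonsingular, and \emph{then} take $|\zeta|$ large enough that the remainders $r_{\alpha_k}$ do not destroy that nonsingularity. (The distinctness of the $s_j=\fii\bw\cdot\bx_j$ that you mention is actually irrelevant to this step.) Second, and more importantly for the paper's purposes, the explicit product form is what makes the subsequent stability theorem work: in Theorem~\ref{THM:Stab Data} one needs sharp pointwise lower bounds on $|\psi_{j'}(\bx_{2,j'})|$ and upper bounds on $\|\psi_{j'}\|_{L^2(\partial\Omega)}$, $\|\partial_\nu\psi_{j'}\|_{L^2(\partial\Omega)}$ in terms of $\sigma$ and ${\rm diam}_{\cS}(\Omega)$. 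Your abstract $\psi$ carries no such quantitative structure, so while it proves the lemma as stated, it would not plug into the downstream estimates without substantial additional work.
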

\begin{proof}
We construct the function $\psi$ as follows:
\begin{equation}\label{EQ:Test Func Form}
\psi(\bx) = \phi(\bx) \prod_{j=1}^m \big(\cS(\bx)-\cS(\bx_j)\big).
\end{equation}
Then clearly $\psi(\bx_j) = 0$, $j=1, 2, \cdots, m$. It is straightforward to verify that $\Delta \cS = 0$ and $ \nabla \phi\cdot \nabla \cS = 0$, which allow us to check that $\psi$ solves ~\eqref{EQ:Test Func}.
\end{proof}

We are now ready to prove the stability of the reconstruction. Our reconstruction scheme follows a two-step process. In the first step, the locations of the point sources are probed by a projection method. In the second step, we use the reconstructed locations $\{\bx_j\}_{j=1}^m$ to reconstruct the strengths of the point sources $\{\lambda_j\}_{j=1}^m$.
\begin{theorem}\label{THM:Stab Data}
 Let $q_1$ and $q_2$ be two sources of the form~\eqref{EQ:Source12} with $m:=m_1=m_2$ that are reconstructed from the Cauchy data $(f_1, g_1)$ and $(f_2, g_2)$ respectively. Let $\mathcal{S}$ be the projection in Definition~\ref{DEF:PROJ}. Let $\sigma := \displaystyle\min_{\ell, i\neq j}|\cS(\bx_{\ell,i}) - \cS(\bx_{\ell ,j})|$ and assume that $\sigma>0$. Then, under the assumptions in {\bf(A)}, there exists a permutation $\pi\in{\rm Sym(m)}$ acting on $\{1, 2, \cdots, m\}$ and a constant $\fc_1$  depending on $\Omega$, $\phi$, $\cS$, and $m$, such that
	\begin{multline}\label{EQ:Stab Loc Data}
	\rho_{\bx}:=\max_{1\le j\le m} |\mathcal{S}(\bx_{1,j})-\mathcal{S}(\bx_{2,\pi(j)})| \\ 
	\le \fc_1 \left( \frac{\sqrt{|\partial\Omega|}({\rm diam}_{\cS}(\Omega))^{2m-1} }{\underline{\lambda} \sigma^{m-1}}\left( \|g_1 - g_2\|_{L^2(\partial\Omega)} + \|f_1 - f_2\|_{L^2(\partial\Omega)} \right)\right)^{\frac{1}{m}}
	\end{multline}
	where $|\partial\Omega|$ is the surface measure of $\partial\Omega$. Assume further that $\rho_{\bx}< \sigma$, and let $\wt\rho_x:=\displaystyle \max_{1\le j\le m}|\cU(x_{1,j}) -\cU(\bx_{2,\pi(j)})|$, then there exists constants $\fc_2$ and $\fc_3$, again depending on $\Omega$, $\phi$, $\cS$, and $m$, such that
	\begin{multline}\label{EQ:Stab Strength Data}
	\rho_\lambda:=\max_{1\le j\le m}|\lambda_{1,j} - \lambda_{2,\pi(j)}| \\ 
	\le  \fc_2 \overline{\lambda} \widetilde{\rho}_\bx   + \fc_3 \sqrt{|\partial\Omega|}({\rm diam}_{\cS}(\Omega))^{2m-2} \big(\|g_1 - g_2\|_{L^2(\partial\Omega)} + \|f_1 - f_2\|_{L^2(\partial\Omega)}\big).
	\end{multline}
	
\end{theorem}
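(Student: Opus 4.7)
The plan is to combine Green's identity with the family of test functions supplied by Lemma~\ref{LEM:Test Func}. For any $\psi$ solving $\Delta\psi + k^2(1+n)\psi = 0$ in $\Omega$, integration by parts against $w = u_1-u_2$ yields the reciprocity identity
\begin{equation}\label{EQ:Reciprocity}
\sum_{j=1}^m \lambda_{1,j}\psi(\bx_{1,j}) - \sum_{j=1}^m \lambda_{2,j}\psi(\bx_{2,j}) = \int_{\partial\Omega}\!\bigl[\psi(g_1-g_2) - (f_1-f_2)\partial_\nu\psi\bigr]dS.
\end{equation}
Cauchy--Schwarz bounds the right-hand side by $\sqrt{|\partial\Omega|}\bigl(\|\psi\|_{L^\infty(\Omega)}+\|\nabla\psi\|_{L^\infty(\Omega)}\bigr)\eta$, where $\eta := \|g_1-g_2\|_{L^2(\partial\Omega)} + \|f_1-f_2\|_{L^2(\partial\Omega)}$. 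This is the only place measurement error enters, and all remaining work consists in choosing $\psi$ to extract geometric information.

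For the location estimate~\eqref{EQ:Stab Loc Data}, for each index $j_0\in\{1,\ldots,m\}$ I would use
\[
\psi_{j_0}(\bx) := \phi(\bx)\prod_{k=1}^m\bigl(\cS(\bx)-\cS(\bx_{2,k})\bigr)\prod_{j\neq j_0}\bigl(\cS(\bx)-\cS(\bx_{1,j})\bigr),
\]
which is admissible by Lemma~\ref{LEM:Test Func} and annihilates every source location except $\bx_{1,j_0}$. Substituting into~\eqref{EQ:Reciprocity} collapses the left-hand side to the single term $\lambda_{1,j_0}\phi(\bx_{1,j_0})\prod_{j\neq j_0}(\cS(\bx_{1,j_0})-\cS(\bx_{1,j}))\prod_{k=1}^m(\cS(\bx_{1,j_0})-\cS(\bx_{2,k}))$. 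Lemma~\ref{LEM:FRAME} gives $|\phi|\ge\inf_\Omega|\phi|>0$ as well as the upper bounds $\|\psi_{j_0}\|_{L^\infty}+\|\nabla\psi_{j_0}\|_{L^\infty} \le C\mu\inf|\phi|(\mathrm{diam}_{\cS}(\Omega))^{2m-1}$; the separation of the $\bx_{1,j}$ forces $\prod_{j\neq j_0}|\cS(\bx_{1,j_0})-\cS(\bx_{1,j})|\ge\sigma^{m-1}$. Rearranging,
\[
\prod_{k=1}^m\bigl|\cS(\bx_{1,j_0})-\cS(\bx_{2,k})\bigr| \le C\,\frac{\sqrt{|\partial\Omega|}(\mathrm{diam}_{\cS}(\Omega))^{2m-1}}{\underline{\lambda}\,\sigma^{m-1}}\,\eta,
\]
and since the minimum of $m$ nonnegative factors is bounded by the $m$-th root of their product, setting $\pi(j_0):=\arg\min_k|\cS(\bx_{1,j_0})-\cS(\bx_{2,k})|$ produces the stated bound on $\rho_{\bx}$. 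To confirm that $\pi$ is a genuine permutation: if the bound is at most $\sigma/2$, nearest-neighbor matching is injective, since two distinct $\bx_{1,i},\bx_{1,j}$ mapping to the same $\bx_{2,k}$ would violate $|\cS(\bx_{1,i})-\cS(\bx_{1,j})|\ge\sigma$; otherwise the estimate is vacuous and any bijection may be chosen.

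For the strength estimate~\eqref{EQ:Stab Strength Data}, with $\pi$ fixed and under $\rho_{\bx}<\sigma$, I would use the symmetric test function
\[
\Psi_{j_0}(\bx) := \phi(\bx)\prod_{j\neq j_0}\bigl(\cS(\bx)-\cS(\bx_{1,j})\bigr)\bigl(\cS(\bx)-\cS(\bx_{2,\pi(j)})\bigr),
\]
again valid by Lemma~\ref{LEM:Test Func} and of degree $2m-2$ in $\cS$. It kills every source location on both sides except $\bx_{1,j_0}$ and $\bx_{2,\pi(j_0)}$, so~\eqref{EQ:Reciprocity} becomes $\lambda_{1,j_0}\Psi_{j_0}(\bx_{1,j_0})-\lambda_{2,\pi(j_0)}\Psi_{j_0}(\bx_{2,\pi(j_0)})=\text{RHS}$, which I split as
\[
(\lambda_{1,j_0}-\lambda_{2,\pi(j_0)})\Psi_{j_0}(\bx_{1,j_0}) + \lambda_{2,\pi(j_0)}\bigl[\Psi_{j_0}(\bx_{1,j_0})-\Psi_{j_0}(\bx_{2,\pi(j_0)})\bigr].
\]
A triangle inequality gives $|\cS(\bx_{1,j_0})-\cS(\bx_{2,\pi(j)})|\ge\sigma-\rho_{\bx}>0$ for $j\neq j_0$, yielding a uniform lower bound $|\Psi_{j_0}(\bx_{1,j_0})|\ge\inf|\phi|\,\sigma^{m-1}(\sigma-\rho_{\bx})^{m-1}$. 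The mean-value theorem plus unitarity of $\cU$ (so $|\bx_{1,j_0}-\bx_{2,\pi(j_0)}|=|\cU(\bx_{1,j_0})-\cU(\bx_{2,\pi(j_0)})|\le\wt\rho_{\bx}$) yields $|\Psi_{j_0}(\bx_{1,j_0})-\Psi_{j_0}(\bx_{2,\pi(j_0)})|\le\|\nabla\Psi_{j_0}\|_{L^\infty}\wt\rho_{\bx}\le C(\mathrm{diam}_{\cS}(\Omega))^{2m-2}\wt\rho_{\bx}$. Solving the split identity for $\lambda_{1,j_0}-\lambda_{2,\pi(j_0)}$ and inserting the RHS bound $\le C\sqrt{|\partial\Omega|}(\mathrm{diam}_{\cS}(\Omega))^{2m-2}\eta$ delivers~\eqref{EQ:Stab Strength Data}.

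The main technical obstacle is the bookkeeping: keeping the lower bound on $|\Psi_{j_0}(\bx_{1,j_0})|$ uniform (so that division by it stays stable as $m$ varies), and cleanly separating the $m$-, $\phi$-, $\mu$-, and $\sigma$-dependent factors that must be absorbed into $\fc_1,\fc_2,\fc_3$. A secondary, more delicate point is justifying that the permutation constructed for the location stability is the \emph{same} one that appears in the strength estimate; under the hypothesis $\rho_{\bx}<\sigma$ this is automatic, which is precisely why the second bound is conditioned on that inequality.
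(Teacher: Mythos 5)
Your argument is essentially the paper's: the same Green's-identity reciprocity relation, the same product test functions $\phi\prod\big(\cS(\cdot)-\cS(\bx_{\ell,j})\big)$ (you isolate $\bx_{1,j_0}$ where the paper isolates $\bx_{2,j'}$, which is immaterial by symmetry), the same lower bound of the form $\theta\,\underline{\lambda}\,\sigma^{m-1}\rho^m$ played against the Cauchy--Schwarz upper bound, and, for the strengths, the identical symmetric test function with the identical splitting, the same $\theta\,\sigma^{m-1}(\sigma-\rho_\bx)^{m-1}$ lower bound, and the same mean-value step producing the $\overline{\lambda}\,\wt\rho_\bx$ term. The one place you genuinely deviate is the construction of the permutation: the paper notes that the argument is symmetric in the two families, so it controls the full Hausdorff distance between $\{\cS(\bx_{1,j})\}$ and $\{\cS(\bx_{2,j})\}$ and then invokes Hall's marriage theorem (as in El Badia--El Hajj) to extract $\pi$, whereas you use the nearest-neighbour map together with the separation $\sigma$ to get injectivity. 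Your route is fine, and more self-contained, in the regime where the right-hand side of \eqref{EQ:Stab Loc Data} is strictly below $\sigma/2$ (which is the only regime in which the estimate carries information, and the only one in which the strength bound is invoked); note only that at the threshold you need strict inequality to contradict $|\cS(\bx_{1,i})-\cS(\bx_{1,j})|\ge\sigma$. Your fallback ``otherwise the estimate is vacuous'' is, however, not quite right as stated: if the bound lies between $\sigma/2$ and ${\rm diam}_{\cS}(\Omega)$, an arbitrary bijection only yields $\rho_\bx\le{\rm diam}_{\cS}(\Omega)$, and turning that into \eqref{EQ:Stab Loc Data} forces $\fc_1$ to absorb a factor of order ${\rm diam}_{\cS}(\Omega)/\sigma$, which is not among the permitted dependencies of $\fc_1$. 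Either state the location bound under an explicit smallness condition on the data, or adopt the paper's symmetric/Hausdorff--Hall step, which avoids the case split altogether; with that adjustment your proof matches the paper's in substance and in the (admittedly $\sigma$- and $\rho_\bx$-dependent) constants of the strength estimate.
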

\begin{proof}
	Let $u_\ell$ $(\ell=1, 2)$ be the solution to the Helmholtz equation~\eqref{EQ:Helmholtz} with source $q_\ell$. We define $w:=u_1-u_2$. Then $w$ solves the Helmholtz equation~\eqref{EQ:Helmholtz} with boundary data $(w, \partial_\nu w):=(f_1-f_2, g_1-g_2)$. Let $1\le j' \le m$ be an integer. From Lemma~\ref{LEM:Test Func}, we find a function, $\phi$ being defined in Lemma~\ref{LEM:FRAME}, 
	\begin{equation}\label{EQ:Psi j'}
		\psi_{j'}(\bx)=\phi(\bx)\prod_{i=1}^m(\cS(\bx) - \cS(\bx_{1,i})) \prod_{j=1, j\neq j'}^m (\cS(\bx) - \cS(\bx_{2,j}))
	\end{equation}
	that solves the equation
	\begin{equation}\label{EQ:Psi-t}
		\Delta \psi_{j'} + k^2(1 + n) \psi_{j'} = 0, \ \ \mbox{in}\ \ \Omega
	\end{equation}
	and satisfies
	\begin{equation}
		\psi_{j'}(\bx_{\ell,j}) = 0,\ \  \forall (\ell, j)\neq (2, j') .
	\end{equation} 
	
	Multiplying the equation for $w$ by $\psi_{j'}$ and the equation for $\psi_{j'}$ by $w$, taking the difference of the results, and applying Green's identity, we have, with $\fs(\bx)$ the Lebesgue measure on $\partial\Omega$,
	\begin{equation*}
	\begin{aligned}
	&\left| \int_{\partial\Omega} \big((f_1-f_2) \partial_{\nu} \psi_{j'} - \psi_{j'} (g_1 - g_2)\big)d\fs \right| =\left| \lambda_{2,j'} \psi_{j'}(\bx_{2,j'})\right|\\
	=& \left|\lambda_{2,j'} \phi(\bx) \prod_{i=1}^m(\cS(\bx_{2,j'}) - \cS(\bx_{1,i})) \prod_{j=1, j\neq j'}^m (\cS(\bx_{2,j'}) - \cS(\bx_{2,j})) \right|
	\ge  \theta \underline{\lambda} \rho_{j'}^m \sigma^{m-1}.
	\end{aligned}
	\end{equation*}
	where $\theta := \inf_{\bx\in\Omega}|\phi(\bx)|$,  $\rho_{j'} :=  \dmin_{1\le j\le m} |\cS(\bx_{2,j'})-\cS(\bx_{1,j})|$, and we have used $|\lambda_{2,j'}|\ge \underline{\lambda}$ by the assumptions in {\bf(A)}. 
	
	Meanwhile, by the Cauchy-Schwartz inequality, we have
	\begin{multline*}
	\left| \int_{\partial\Omega} \big((f_1-f_2) \partial_{\nu} \psi_{j'} - \psi_{j'} (g_1 - g_2)\big)d\fs \right| \\ 
	\le \|g_1 - g_2\|_{L^2(\partial\Omega)} \|\psi_{j'}\|_{L^2(\partial\Omega)} + \|f_1 -f_2\|_{L^2(\partial\Omega)} \|\partial_{\nu} \psi_{j'}\|_{L^2(\partial\Omega)},
	\end{multline*}
	where we can estimate 
	\begin{equation*}
	\begin{aligned}
	\|\psi_{j'}\|_{L^2(\partial\Omega)} &\le \vartheta \sqrt{|\partial\Omega|} ({\rm diam}_{\cS}(\Omega))^{2m-1},\\ \|\partial_{\nu}\psi_{j'}\|_{L^2(\partial\Omega)} &\le \vartheta  \sqrt{|\partial\Omega|}({\rm diam}_{\cS}(\Omega))^{2m-1} + \fc\theta \sqrt{|\partial\Omega|}({\rm diam}_{\cS}(\Omega))^{2m-2},
	\end{aligned}
	\end{equation*}
 with $\vartheta := \|\phi\|_{W^{1,\infty}(\Omega)}$ and $\fc =\fc(\Omega, \cS, m)$ a bounded constant. Using the fact that $\vartheta < \mu \theta$, given in Lemma~\ref{LEM:FRAME}, we conclude from the above calculations that
	\begin{equation}
 \underline{\lambda} \rho_{j'}^m \sigma^{m-1} \le  \wt\fc \sqrt{|\partial\Omega|}({\rm diam}_{\cS}(\Omega))^{2m-1} \big(\|g_1 - g_2\|_{L^2(\partial\Omega)} + \|f_1 - f_2\|_{L^2(\partial\Omega)}\big) 
	\end{equation}
	with $\wt\fc$ a bounded constant that depends on $\Omega$, $\phi$, $\cS$, and $m$. 
	
	Since $j'$ is taken arbitrarily, we conclude that
	\begin{equation}
	\max_{1\le j' \le m}  \rho_{j'} \le \left( \frac{\wt\fc \sqrt{|\partial\Omega|}({\rm diam}_{\cS}(\Omega))^{2m-1} }{\underline{\lambda} \sigma^{m-1}}\left( \|g_1 - g_2\|_{L^2(\partial\Omega)} + \|f_1 - f_2\|_{L^2(\partial\Omega)} \right)\right)^{1/m}.
	\end{equation}
	By the symmetry in our calculations between the two groups of point sources, we see that we could replace the left hand side of the above inequality with the Hausdorff distance between the two groups of projected points $\{\cS(\bx_{1,j})\}_{j=1}^m$ and $\{\cS(\bx_{2,j})\}_{j=1}^m$:
	\[
		\max\{ \max_{1\le j'\le m} \dmin_{1\le j\le m} |\cS(\bx_{2,j'})-\cS(\bx_{1,j})|, \max_{1\le j'\le m} \dmin_{1\le j\le m} |\cS(\bx_{1,j'})-\cS(\bx_{2,j})|\}.
	\]
	The stability result~\eqref{EQ:Stab Loc Data} then follows from this fact and the Hall theorem~\cite{Cameron-Book94,ElEl-CRASP12}, which states that there exists a permutation $\pi$ acting on $\{1,2,\cdots,m\}$, that is a renumbering of the points, such that the Hausdorff distance can be realized by $\rho_\bx$.
	 
	The next step is to establish the stability for the strengths of point sources. For an integer $1\le j'\le m$, we introduce the function
		\begin{equation*}
		\varphi_{j'}(\bx)=\phi(\bx)\prod_{j=1, j\neq j'}^m(\cS(\bx) - \cS(\bx_{1,j})) (\cS(\bx) - \cS(\bx_{2,\pi(j)})).
	\end{equation*}
	Then $\varphi_{j'}$ solves the equation
	\begin{equation*}
		\Delta \varphi_{j'} + k^2(1 + n) \varphi_{j'} = 0, \ \ \mbox{in}\ \ \Omega
	\end{equation*}
	and satisfies
	\begin{equation*}
		\varphi_{j'}(\bx_{\ell,j}) = 0,\ \  \forall (\ell, j) \notin \{(1, j'), (2, \pi(j'))\}.
	\end{equation*} 
	
		Following the same procedure as before, we multiply the equation for $w$ by $\varphi_{j'}$ and the equation for $\varphi_{j'}$ by $w$, take the difference of the results, and apply Green's identity to obtain,
	\begin{equation*}
	\left| \lambda_{2,\pi(j')} \varphi_{j'}(\bx_{2,\pi(j')}) - \lambda_{1,j'} \varphi_{j'}(\bx_{1,j'})\right|= \left| \int_{\partial\Omega} \left( (f_1 - f_2)\partial_{\nu}\varphi_{j'} - \varphi_{j'} (g_1 - g_2)  \right) d\fs \right|.
	\end{equation*}
	
	By the Cauchy-Schwartz inequality, we have
	\begin{multline*}
	\left| \int_{\partial\Omega} \big((f_1-f_2) \partial_{\nu} \varphi_{j'} - \varphi_{j'} (g_1 - g_2)\big)d\fs \right| \\ 
	\le \|g_1 - g_2\|_{L^2(\partial\Omega)} \|\varphi_{j'}\|_{L^2(\partial\Omega)} + \|f_1 -f_2\|_{L^2(\partial\Omega)} \|\partial_{\nu} \varphi_{j'}\|_{L^2(\partial\Omega)},
	\end{multline*}
	where
	\begin{equation*}
	\begin{aligned}
	\|\varphi_{j'}\|_{L^2(\partial\Omega)} &\le \vartheta \sqrt{|\partial\Omega|} ({\rm diam}_{\cS}(\Omega))^{2m-2},\\ \|\partial_{\nu}\varphi_{j'}\|_{L^2(\partial\Omega)} &\le \vartheta  \sqrt{|\partial\Omega|}({\rm diam}_{\cS}(\Omega))^{2m-2} + \fc\theta \sqrt{|\partial\Omega|}({\rm diam}_{\cS}(\Omega))^{2m-3},
	\end{aligned}
	\end{equation*}
 with $\vartheta$ and $\theta$ defined as before, and $\fc =\fc(\Omega, \cS, m)$ a bounded constant. We therefore have
	\begin{multline}\label{EQ:Stab Strength A}
	\left| \lambda_{2,\pi(j')} \varphi_{j'}(\bx_{2,\pi(j')}) - \lambda_{1,j'} \varphi_{j'}(\bx_{1,j'})\right| \\ 
	\le \wt\fc \sqrt{|\partial\Omega|}({\rm diam}_{\cS}(\Omega))^{2m-1} \big(\|g_1 - g_2\|_{L^2(\partial\Omega)} + \|f_1 - f_2\|_{L^2(\partial\Omega)}\big).
	\end{multline}
	
	We now verify, using the assumption that $\sigma > \bar \rho_\bx$, that,
	\begin{multline*}
	|\varphi_{j'}(\bx_{1,j'})|=|\phi(\bx_{1,j'})\prod_{j=1, j\neq j'}^m(\cS(\bx_{1,j'}) - \cS(\bx_{1,j})) (\cS(\bx_{1,j'}) - \cS(\bx_{2,\pi(j)}))|\\ \ge \theta \sigma^{m-1} \prod_{j=1, j\neq j'}^m \big| \left|\cS(\bx_{1,j'}) -\cS(\bx_{1,j})\right| - \left| \cS(\bx_{2,\pi(j)}) -\cS(\bx_{1,j})\right| \big| 
	\ge \sigma^{m-1} |\sigma -  \rho_\bx|^{m-1}.
	\end{multline*}
	This allows us to conclude that, for some constants $\fc'$ and $\fc''$ (for instance, one could take $\fc' = \theta^{-1} \sigma^{1-m}(\sigma -\rho_{\bx})^{1-m}$), we have
	\begin{equation}\label{EQ:Stab Strength B}
	\begin{aligned}
	\max_{1\le j'\le m} |\lambda_{2,\pi(j')}-\lambda_{1,j'}|&\le \fc' \max_{1\le j'\le m}|\lambda_{2,\pi(j')}\varphi_{j'}(\bx_{1,j'})-\lambda_{1,j'}\varphi_{j'}(\bx_{1,j'})| \\ 
	&\le \fc' \max_{1\le j'\le m}\lambda_{2,\pi(j')}|\varphi_{j'}(\bx_{1,j'})-\varphi_{j'}(\bx_{2,\pi(j')})| \\
	&\quad + \fc' \max_{1\le j'\le m}|\lambda_{2,\pi(j')}\varphi_{j'}(\bx_{2,\pi(j')})-\lambda_{1,j'}\varphi_{j'}(\bx_{1,j'})|\\ &\le \fc'' \overline{\lambda}\widetilde{\rho}_\bx	+ \fc' \max_{1\le j'\le m}|\lambda_{2,\pi(j')}\varphi_{j'}(\bx_{2,\pi(j')})-\lambda_{1,j'}\varphi_{j'}(\bx_{1,j'})|.
	\end{aligned}
\end{equation}	
The stability bound~\eqref{EQ:Stab Strength Data} then follows from ~\eqref{EQ:Stab Strength A} and~\eqref{EQ:Stab Strength B}.
\end{proof}

The stability of reconstructing the locations of the point sources is H\"older type with exponent $\frac{1}{m}$, $m$ being the number of point sources included. The stability deteriorates quickly when $m$ increases. Therefore, we could only hope to reconstruct stably a very small number of point sources in practice. 

The conditional stability of the reconstructing the strengths of the point sources contains two parts. The second part is from the Cauchy data and is Lipschitz type. The constant in front of it, however, depends on $m$. When ${\rm diam}_{\cS}(\Omega)$ is large, this constant blows up quickly with $m$, another indication that one can not hope to stably reconstruct a large number of point sources. The first part comes from error in the determination of the locations of the point sources. If the locations are reconstructed perfectly, this term disappear. If, on the other hand, there is a large error in the reconstructing of the locations, the error in the reconstruction of the strengths is also large.


\subsection{Stability with respect to media changes}
\label{SUBSEC:Uncertainty}

Here we study the stability of the reconstruction of point sources with respect to smooth media changes. We assume that the measured data are collected with a medium $n_1$ that we do not know exactly. We then reconstruct the point sources pretending that the medium in which the data were collected is $n_2$. We show that the reconstructions in $n_2$ is not too different from the reconstructions in $n_1$ if $n_2$ is not too different from $n_1$, in appropriate sense.

\begin{theorem}\label{THM:Stab Medium}
	Let $q_1$ and $q_2$ be two sources of the form~\eqref{EQ:Source12} $(m:=m_1=m_2)$, reconstructed for two media with refractive index $n_1$ and $n_2$ respectively, using Cauchy data $(f, g)$. Under the assumptions in~{\bf(A)} for $(q_\ell, n_\ell)$ ($\ell=1,2$), there exists a permutation $\pi$ acts on $\{1,2\dots,m\}$ such that 
	\begin{equation}\label{EQ:Stab Medium}
	\max_{1\le j \le m} \left| \cS(\bx_{1,j}) - \cS(\bx_{2,\pi(j)})  \right| \le \fc  \left[ \frac{\mu k^2 }{\underline{\lambda} \sigma^{m-1}} \|n_1 - n_2\|_{L^{2}(\Omega)} \right]^{\frac{1}{m}},
	\end{equation}
$\sigma = \min_{i\neq j}|\cS(\bx_{2,i}) - \cS(\bx_{2,j})|$, $\mu$ is from Lemma~\ref{LEM:FRAME}, and $\fc=\fc(\Omega,m,\overline{\lambda}, g)$ is a bounded constant.
\end{theorem}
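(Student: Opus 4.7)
\medskip

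\noindent\textbf{Proof plan.} The strategy is to mimic the proof of Theorem~\ref{THM:Stab Data}, but with the key twist that now the two solutions satisfy \emph{different} Helmholtz equations. I would first set $w:=u_1-u_2$; subtracting the two governing equations gives
\begin{equation*}
\Delta w + k^2(1+n_2)w \;=\; q_1 - q_2 + k^2(n_2-n_1)u_1 \quad\text{in }\Omega,
\end{equation*}
with $w=0$ and $\partial_\nu w=0$ on $\partial\Omega$ since $(f,g)$ is common. The crucial point is that $w$ now satisfies a \emph{Schrödinger-type} equation associated with the medium $n_2$, so the CGO $\phi_2$ and projection $\cS_2$ produced by Lemma~\ref{LEM:FRAME} and Definition~\ref{DEF:PROJ} for the index $n_2$ are the natural objects to use (this is consistent with the $\sigma$ appearing in the statement).

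\medskip

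Next, for each $1\le j'\le m$, I would construct, exactly as in the proof of Theorem~\ref{THM:Stab Data}, the test function
\begin{equation*}
\psi_{j'}(\bx)=\phi_2(\bx)\prod_{i=1}^m(\cS_2(\bx)-\cS_2(\bx_{1,i}))\prod_{j\neq j'}(\cS_2(\bx)-\cS_2(\bx_{2,j})),
\end{equation*}
which solves $\Delta\psi_{j'}+k^2(1+n_2)\psi_{j'}=0$ and vanishes at every source point except $\bx_{2,j'}$. Multiplying the equation for $w$ by $\psi_{j'}$ and vice versa, subtracting, and using Green's identity, \emph{all boundary terms vanish} (because $w$ has zero Cauchy data), leaving the identity
\begin{equation*}
\lambda_{2,j'}\,\psi_{j'}(\bx_{2,j'}) \;=\; k^2\int_{\Omega}\psi_{j'}(\bx)\bigl(n_2(\bx)-n_1(\bx)\bigr)u_1(\bx)\,d\bx.
\end{equation*}
Note that the $q_1-q_2$ contribution collapses to $-\lambda_{2,j'}\psi_{j'}(\bx_{2,j'})$ since $\psi_{j'}$ is engineered to kill every other Dirac mass.

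\medskip

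The final step is to lower-bound the left-hand side and upper-bound the right-hand side. Lower bound: with $\theta:=\inf_\Omega|\phi_2|$ and $\rho_{j'}:=\min_{1\le j\le m}|\cS_2(\bx_{2,j'})-\cS_2(\bx_{1,j})|$, the factorization of $\psi_{j'}$ together with the separation $\sigma$ gives $|\psi_{j'}(\bx_{2,j'})|\ge \theta\,\rho_{j'}^{\,m}\sigma^{m-1}$, and $|\lambda_{2,j'}|\ge\underline\lambda$. Upper bound: by Cauchy--Schwarz,
\begin{equation*}
\left|k^2\!\int_{\Omega}\psi_{j'}(n_2-n_1)u_1\,d\bx\right| \le k^2\,\|n_1-n_2\|_{L^2(\Omega)}\,\|\psi_{j'}u_1\|_{L^2(\Omega)}.
\end{equation*}
Combining these with $\vartheta<\mu\theta$ from Lemma~\ref{LEM:FRAME} (so that a $\theta$ cancels on both sides and produces the $\mu$ in the statement), rearranging, and taking $m$-th roots yields
\begin{equation*}
\rho_{j'} \;\le\; \fc\Bigl[\tfrac{\mu k^2}{\underline\lambda\,\sigma^{m-1}}\|n_1-n_2\|_{L^2(\Omega)}\Bigr]^{1/m}
\end{equation*}
uniformly in $j'$, and since the estimate is symmetric in the two point clouds, the Hausdorff distance satisfies the same bound, which Hall's theorem upgrades to the existence of a permutation $\pi$ realizing the max on the left of~\eqref{EQ:Stab Medium}.

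\medskip

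\noindent\textbf{Main obstacle.} The delicate technical point is bounding $\|\psi_{j'}u_1\|_{L^2(\Omega)}$ by a constant that can honestly be absorbed into $\fc(\Omega,m,\overline\lambda,g)$. The function $u_1$ has $1/|\bx-\bx_{1,i}|$ singularities at every point source, which are genuinely \emph{not} in $L^2(\Omega)$ in $d=3$; the saving grace is that $\psi_{j'}$ carries a factor $(\cS_2(\bx)-\cS_2(\bx_{1,i}))$ that vanishes linearly at each $\bx_{1,i}$, so the product $\psi_{j'}u_1$ is bounded near those singular points and thus square-integrable. Making this precise requires a local decomposition of $u_1$ into the Green's-function part near each $\bx_{1,i}$ plus a smooth remainder controlled by $\overline\lambda$, $f$, and standard elliptic regularity for $n_1$, after which the diameter ${\rm diam}_{\cS_2}(\Omega)$ together with these a priori bounds is swept into the constant $\fc$. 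Everything else is a direct transcription of the Theorem~\ref{THM:Stab Data} argument with boundary terms replaced by the volumetric perturbation $k^2(n_2-n_1)u_1$.
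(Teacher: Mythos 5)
Your proposal is correct and follows essentially the same route as the paper: the paper writes the difference equation for $w=u_1-u_2$ with the $n_1$-operator and perturbation $-k^2(n_1-n_2)u_2$, builds the test functions $\psi_{j'}$ from the CGO solution adapted to $n_1$, applies Green's identity with the vanishing Cauchy data, and bounds the volume term via Cauchy--Schwarz with $\|\psi_{j'}\|_{L^\infty(\Omega)}$ and $\|u_2\|_{L^2(K)}\le \fc(1+m\overline{\lambda})$ (using a Neumann Green's-function representation of $u_2$ on $K=\supp(n_1-n_2)$), before invoking symmetry and Hall's theorem exactly as you do; your version is simply the mirror image with the $n_2$-operator, $u_1$, and the CGO for $n_2$. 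The only slip is in your ``main obstacle'': in $d=3$ the singularity $|\bx-\bx_{1,i}|^{-1}$ is in fact locally square integrable, so $u_1\in L^2(\Omega)$ and no cancellation from the vanishing factors of $\psi_{j'}$ is needed, though your vanishing-factor argument is valid and does not affect correctness.
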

\begin{proof}
	Let $u_\ell$ $(\ell=1, 2)$ be the solution to the Helmholtz equation~\eqref{EQ:Helmholtz} with the source and refractive index pair $(n_\ell, q_\ell)$. We define $w:=u_1-u_2$, $\delta n=n_1-n_2$, and $\delta q=q_1-q_2$. Then $w$ solves 
	\begin{equation*}\label{EQ:Helmholtz Diff Media}
		\Delta w + k^2 (1+n_1) w = \delta q(\bx) -k^2 \delta n(\bx) u_2(\bx),\ \ \mbox{in}\ \ \Omega
	\end{equation*}
	with boundary data $(w, \partial_\nu w):=(0, 0)$. 
	
	Let $\phi$ be the function defined in Lemma~\ref{LEM:FRAME} for the medium with refractive index $n_1$. Let $1\le j' \le m$ be an integer. We use the function 	
	\begin{equation*}
		\psi_{j'}(\bx)=\phi(\bx)\prod_{i=1}^m(\cS(\bx) - \cS(\bx_{1,i})) \prod_{j=1, j\neq j'}^m (\cS(\bx) - \cS(\bx_{2,j})) .
	\end{equation*}
	This function now solves the equation
	\begin{equation*}
		\Delta \psi_{j'} + k^2(1 + n_1) \psi_{j'} = 0, \ \ \mbox{in}\ \ \Omega
	\end{equation*}
	and satisfies
	\begin{equation*}
		\psi_{j'}(\bx_{\ell,j}) = 0,\ \  \forall (\ell, j)\neq (2, j') .
	\end{equation*} 

	Multiplying the equation for $w$ by $\psi_{j'}$ and the equation for $\psi_{j'}$ by $w$, taking the difference of the results, and applying Green's identity, we have
	\begin{equation}
	k^2\int_{\Omega} (n_2 - n_1)(\bx) \psi_{j'}(\bx) u_2(\bx) d\bx = \lambda_{2,j'} \psi_{j'}(\bx_{2,j'})
	\end{equation}
	By the Cauchy-Schwartz inequality, we have
	\begin{equation*}
	k^2 \left| \int_{\Omega} (n_2 - n_1)(\bx) \psi_{j'}(\bx) u_2(\bx) d\bx \right| \le k^2 \|n_2 - n_1\|_{L^{2}(K)} \|\psi_{j'}\|_{L^{\infty}(\Omega)} \|u_2\|_{L^2(K)},
	\end{equation*}
	where $K := \supp(n_2 - n_1)\subset \subset \Omega$, $\|\psi_{j'} \|_{L^{\infty}(\Omega)} \le \|\phi\|_{L^{\infty}(\Omega)} ({\rm diam}_{\cS}(\Omega))^{2m-1}$. 
	
Let $G$ be the fundamental solution for $\Delta + k^2(1+n_2)$ with homogeneous Neumann boundary condition. We have the following representation for $u_2$:
	\begin{equation}\label{EQ:U2}
		u_2(\bx) = \sum_{j=1}^{m} \lambda_{2,j} G(\bx, \bx_{2,j}) - \int_{\partial\Omega} G(\bx, \by) g(\by) d\fs
	\end{equation}
	This allows us to conclude that $\|u_2\|_{L^2(K)}\le \fc(1+m \overline{\lambda})$ for some constant $\fc = \fc(\Omega, K, g)$.
	
	The rest of the proof is identical to that of Theorem~\ref{THM:Stab Data}. We first verify that
	\[
		\left| \lambda_{2,j} \psi_{j'}(\bx_{2,j'})\right| \ge  \theta \underline{\lambda} \rho_{j'}^{m} \sigma^{m-1}
	\]
	where $\rho_{j'}: =  \dmin_{1\le j\le m} {\rm dist}_{\cS}(\bx_{2,j'}, \bx_{1,j})$ is defined the same way as before.
	
	Therefore, we have obtained
	\begin{equation*}
	\max_{1\le j' \le m} \rho_{j'} \le \fc \left[ \frac{\mu k^2 }{\underline{\lambda} \sigma^{m-1}} \|n_1 - n_2\|_{L^2(\Omega)} \right]^{\frac{1}{m}},
	\end{equation*}
	The stability bounded~\eqref{EQ:Stab Medium} then follows from symmetry argument and the Hall theorem~\cite{Cameron-Book94,ElEl-CRASP12}.	
\end{proof}

This result shows that the reconstruction of the locations of the point sources, up to a permutation, is relatively robust against changes in the underlying medium. However, the stability again deteriorates fast when the number of point sources increases.

Note that the above result is based on the assumption that we know exactly the number of point sources inside the medium. We do not have a general uniqueness result that allows us to determine the number of point sources from the measurement in this case. However, in some special cases, we can hope to reconstruct uniquely the point sources in a unknown medium that is has sufficiently simple structures, utilizing the fact that point sources are more singular compared to media variations. 

\paragraph{Media with localized perturbations.} Let us consider the case where the medium $n_2$ is the homogeneous medium $n_1$ with a finite number of additional localized anomalies. That is, the refractive index $n_2$ is of the form:
\[
	n_2(\bx) = n_1+\sum_{k=1}^K \tau_k \chi_{\Omega_k},
\]
where $\Omega_k$ is the support of the $k$-th anomaly and $\tau_k$ is its strength. We assume that the point sources are away from the local anomalies of the medium, that is, ${\rm dist}(\bx_j, \Omega_k)\ge \fc>0$ $\forall j,k$, for some $\fc$. Then we could follow the same proof in Lemma~\ref{LEM:Uniqueness Known Media} to show that $w$ must vanish outside the support of $\delta n=n_2-n_1$. This allows us to uniquely (up to a permutation as before) determine the point sources that do NOT live in the support of the local anomalies of the medium. This provides a uniqueness argument for the numerical point source reconstructions in~\cite{BuLaTaTs-AFR09}.

\section{Numerical experiments}
\label{SEC:Num}

We now perform some numerical simulations in the context of the theoretical study in the previous section. We have two specific aims in mind: (i) when the underlying medium is known but heterogeneous, we want to see how well we can reconstruct point sources inside the medium; and (ii) when the underlying medium is not known, we want to see how the reconstruction of point sources are affected by the reconstruction of the medium. 

We therefore intend to reconstruct the medium as well as the point sources inside the medium. We assume that we have access to multiple Cauchy data sets. We use a two-step reconstruction process. In the first step, we form \emph{differential} data sets to eliminate the effect of the point sources and focus only on the medium. In the second step, we use a \emph{single} data set to reconstruct the point sources, so that our simulations are in consistent with the theory developed in the previous section.

\paragraph{Media reconstruction.} Let $u$ be the solution to the Helmholtz equation~\eqref{EQ:Helmholtz}, and $\wt u$ be the solution of the same equation but with boundary source $f(\bx)+h(\bx)$. Then we check that $v:=\wt u-u$ solves the Helmholtz equation
\begin{equation}\label{EQ:Helmholtz Diff Dif}
	\begin{array}{rcll}
		\Delta v + k^2\big(1+n(\bx)\big) v &=& 0,& \mbox{in}\ \ \Omega\\
		v & = & h(\bx), & \mbox{on}\ \ \partial\Omega
	\end{array}
\end{equation}
By changing the probe source $h(\bx)$, we could obtain data determined by the Dirichlet-to-Neumann operator
\begin{equation}\label{EQ:Data Diff}
	\Lambda_n:		h(\bx) \mapsto g(\bx)=\partial_\nu v_{|\partial\Omega}: =\partial_\nu \wt u_{|\partial\Omega}-\partial_\nu u_{|\partial\Omega} .
\end{equation}
These data allow us to reconstruct the refractive index $n(\bx)$ since that is the only unknown quantity in~\eqref{EQ:Helmholtz Diff Dif}. This inverse problem has been studied extensively; see for instance~\cite{Isakov-Book06,NaUhWa-IP13} and references therein.

We perform the reconstruction by reformulate the inverse problem as a minimization problem. Let us assume that we have data generated from $J$ different probe sources $\{h_j\}_{j=1}^J$. We reconstruct $n(\bx)$ by minimizing the following mismatch functional:
\begin{equation}
	\Phi(n):= \dfrac{1}{2}\sum_{j=1}^J \int_{\partial\Omega} (\Lambda_n h_j - g_j^*)^2 d\fs(\bx) + \frac{\beta}{2} \int_{\Omega} |\nabla n|^2 d\bx
\end{equation}
where $g^*_j$ is the measured differential data corresponding to the probe source $h_j$ and the parameter $\beta$ is the strength of the regularization term.

We solve this minimization problem with a quasi-Newton method~\cite{NoWr-Book06,ReBaHi-SIAM06} where we use the adjoint state method to calculate the gradient of the objective functional with respect to the refractive index.  Let $w_j$ $(1\le j\le J)$ be the solution to adjoint equation
\begin{equation}\label{EQ:Helmholtz Adj}
	\Delta w_j + k^2(1+n) w_j = 0,\ \ \mbox{in}\ \ \Omega, \qquad \ w_j=-(\Lambda_n h_j - g_j^*),\ \ \mbox{on}\ \ \partial\Omega
\end{equation}
We can then show that the Fr\'echet derivative of $\Phi$ with respect to $n$ in direction $\delta n$ is given as
\begin{equation}
	\Phi'(n)[\delta n] =k^2 \sum_{j=1}^J \int_\Omega w_j v_j \delta n(\bx) d\bx-\beta\Big[\int_{\Omega} (\Delta n) \delta n(\bx) d\bx-\int_{\partial\Omega}\partial_\nu n \delta n(\bx) d\fs(\bx)\Big].
\end{equation}

In the minimization process, we solve the forward and adjoint Helmholtz problems~\eqref{EQ:Helmholtz Diff Dif} and~\eqref{EQ:Helmholtz Adj} with a standard $P_1$ finite element solver. 

\paragraph{Source reconstruction.} Once the refractive index $n(\bx)$ is reconstructed, we can reconstruct the unknown point sources, encoded in $q(\bx)$ in the Helmholtz equation~\eqref{EQ:Helmholtz}, from observed boundary data. We do this again with a minimization strategy. More precisely, we minimize the functional
\begin{equation}\label{EQ:Obj Src}
		\Psi(\bx_1, \cdots, \bx_m, \lambda_1, \cdots, \lambda_m):= \frac{1}{2}\int_{\partial\Omega} (\bn\cdot\nabla u - g^{\ast})^2 d\fs(\bx) 
\end{equation}
over the locations and strengths of the point sources.

To avoid dealing with the singularity of the solution $u$ due to the point sources, we explicitly factorize out the singular part of $u$ as follows. Let $G(\bx;\by)$ be the fundamental solution of the homogeneous Helmholtz operator in the whole space, that is,
\[
	\Delta G + k^2 G = -\delta(\bx-\by), \ \ \mbox{in}\ \ \bbR^d.
\]
We represent the solution of~\eqref{EQ:Helmholtz} through the integral equation
\begin{multline}
	u(\bx)=-\sum_{j=1}^m \lambda_j G(\bx_j; \bx)+k^2\int_\Omega n(\by)u(\by)G(\by;\bx)d\by \\ 
	+\int_{\partial\Omega} G(\by;\bx) \bn\cdot \nabla u(\by) d\by-\int_{\partial\Omega} f(\by) \bn\cdot\nabla G(\by;\bx) d\by.
\end{multline}
Let $\wh u(\bx):=u(\bx)+\sum_{j=1}^m \lambda_j G(\bx_j; \bx)$, then $\wh u$ solves the integral equation
\begin{equation}\label{EQ:U Nonsingular}
	\wh u(\bx)=k^2\int_\Omega n(\by)\wh u(\by)G(\by;\bx)d\by +\int_{\partial\Omega} G(\by;\bx) \bn\cdot \nabla \wh u(\by) d\by+Q(\bx),
\end{equation}
where the source term
\begin{multline*}
	Q(\bx)=-k^2\sum_{j=1}^m \lambda_j\Big[ \int_\Omega n(\by)G(\bx_j;\by)G(\by;\bx)d\by +\int_{\partial\Omega} G(\by;\bx) \bn\cdot \nabla G(\bx_j;\by) d\by\Big] \\ -\int_{\partial\Omega} f(\by) \bn\cdot\nabla G(\by;\bx) d\by.
\end{multline*}
To find the solution $u$, we solve for $\wh u$ using~\eqref{EQ:U Nonsingular} and then form $u=\wh u- \sum_{j=1}^m \lambda_j G(\bx_j; \bx)$.

To evaluate the gradient of the objective function, we introduce the adjoint problem
\begin{equation}
	\Delta w + k^2(1+n) w = 0,\ \ \mbox{in}\ \ \Omega, \qquad \ w=\bn\cdot\nabla u-g^*,\ \ \mbox{on}\ \ \partial\Omega .
\end{equation}
We can then show that the gradient of $\Psi$ with respect to a parameter the strength $\lambda_k$ and location $\bx_k$ are given respectively as
\begin{equation}
	\dfrac{d\Psi}{d\lambda_k} =w(\bx_j),\qquad\mbox{and},\qquad 	\nabla_{\bx_k} \Psi=\lambda_j \nabla_\bx w(\bx_j).
\end{equation}

The numerical simulations we present below are all done in a two-dimensional domain for simplicity. The best way to make this consistent with the theory in the previous section, which are constructed in dimension three, is to view the the simulations as simplifications of three-dimensional ones for which the refractive index and the illumination sources are invariant in the $z$-direction. We set the domain $\Omega = [0, 1]^2$ and the wave number $k = 8$ in our experiments. We collect $J=6$ \emph{differential} data sets generated from sources $f$ and $\{h_j\}_{j=1}^{J}$ to reconstruct the refractive index. To avoid the inverse crime, the synthetic measurements are generated on a fine grid while the inversion is fulfilled on another coarse grid. Moreover, we pollute our synthetic data with multiplicative random noise by perform the operation:  $ g^*_j\to g^*_j(1 + \tau \cU([-1,1]))$ with $\cU[-1, 1]$ the uniformly distributed random variable in $[-1, 1]$ and $\tau$ the level of noise that we will specify later. The algorithms are implemented in the $\verb|MATLAB|$ software with the source codes deposited at $\verb|github|$~\footnote{The github repository for our source codes is at \href{https://github.com/lowrank/ips}{https://github.com/lowrank/ips}.}.

We performed simulations on several different media. Here we present results on two typical ones that have refractive indices respectively.
\begin{equation}\label{EQ:Med 1}
n(\bx) = 0.5 + 0.5\dsum_{k=1}^2\cos \left(\dfrac{\pi |\bx - \by_k|}{2R}\right)\chi_{D_R(\by_k)}
\end{equation}
where $\by_1 = (0.25, 0.25)$, $\by_2 = (0.75, 0.75)$, $R=0.25$ and $\chi_{D_R(\by_k)}$ is the characteristic function of the disk of radius $R$ centered at $\by_k$, and
\begin{equation}\label{EQ:Med 2}
	n(\bx)=0.4\chi_{Rec} + 0.2 \chi_{D_{0.2}(\by_3)},
\end{equation}
where $\chi_{Rec}$ is the characteristic function of the rectangle $Rec=(0.5\ 0.75)\times(0.25\ 0.75)$; see Figure~\ref{FIG:Unknown Media 1} and Figure~\ref{FIG:Unknown Media 2} respectively for the plots of these refractive indices.

\begin{figure}[htb!]
	\begin{center}
		\includegraphics[scale=0.5]{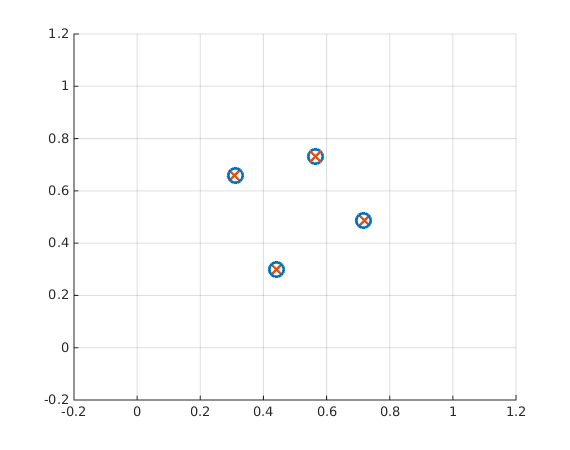}
		\includegraphics[scale=0.5]{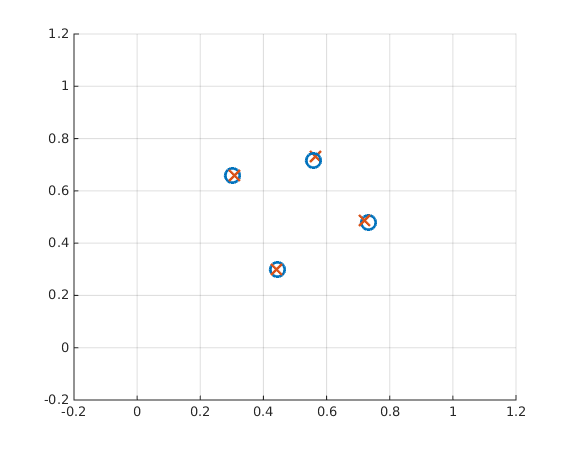}
\caption{Locations of the true (crosses: $\times$) and reconstructed (circles: $\circ$) point sources in medium~\eqref{EQ:Med 1} in Experiment 1. Shown are results with data contain $1\%$ (left) and $5\%$ (right) random noise respectively.}		
	\end{center}
\label{FIG:Known Med 1}
\end{figure}
\paragraph{Experiment 1 [Recovery in Known Environments].} In the first set of numerical experiments, we perform reconstructions of point sources in heterogeneous media with \emph{known} refractive indices. In Figure~\ref{FIG:Known Med 1} and Figure~\ref{FIG:Known Med 2}, we show reconstructions of the locations of the point sources in the media~\eqref{EQ:Med 1} and~\eqref{EQ:Med 2} respectively. The true and reconstructed strengths are summarized in the first two rows of Table~\ref{TAB:Intensities}. 
\begin{figure}[htb!]
	\begin{center}
\includegraphics[scale=0.5]{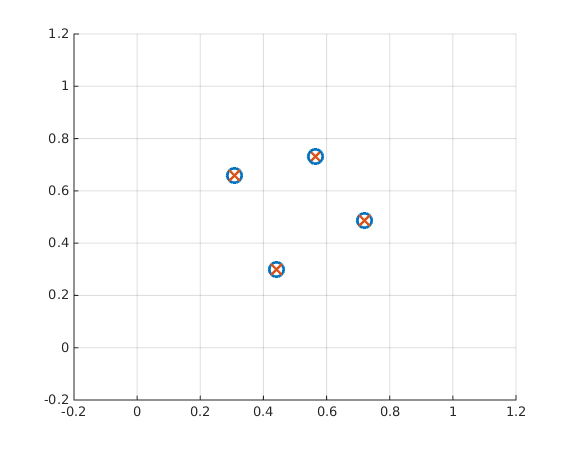}
\includegraphics[scale=0.5]{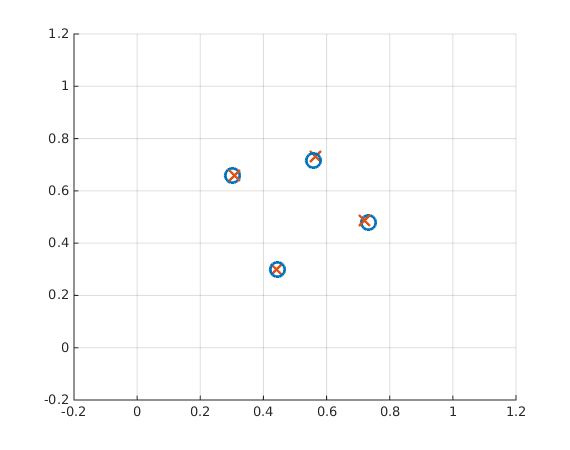}
\caption{Same as Figure~\ref{FIG:Known Med 1} but for the medium with refractive index~\eqref{EQ:Med 2}.}
	\end{center}
\label{FIG:Known Med 2}
\end{figure}
\begin{table}[hbt!]
	\centering
	\small	\caption{True and reconstructed intensities of the point sources, $(\lambda_1, \lambda_2, \lambda_3, \lambda_4)$, in different numerical experiments.}
	\begin{center}
	\begin{tabular}{lccc}
	\hline
	Experiment & True value & \multicolumn{2}{c}{Reconstructions with noisy data} \\
	& & 1\% noise & 5\% noise \\
	\hline
	1: medium~\eqref{EQ:Med 1}& (0.89,0.73,0.71,0.52) & (0.91,0.73,0.68,0.54) & (0.92,0.65,0.76,0.46) \\
	1: medium~\eqref{EQ:Med 2}& (0.89,0.73,0.71,0.52) & (0.88,0.72,0.71,0.52) & (0.88,0.72,0.69,0.54)\\
	2 & (0.89,0.73,0.71,0.52) & (0.91,0.76,0.65,0.58) & (0.87,0.83,0.56,0.64) \\
	3 & (0.89,0.73,0.71,0.52) & (0.90,0.81,0.65,0.56) & (0.89,0.85,0.61,0.60)\\
	\hline
	\end{tabular}
	\end{center}
	\label{TAB:Intensities}
\end{table}
The simulations show that one can indeed reconstruct point sources, both their locations and their intensities, inside heterogeneous media when the media are not unreasonably complex. We want to emphasize here that in our theoretical analysis as well as numerical simulations, both the true sources and the sources to be reconstructed are explicitly assumed to be point sources. In other words, we explicitly search for the locations and strengths of the point sources, instead of reconstructing spatially distributed sources hoping that the result will give us point sources. In general, we observe from our extensive numerical simulations that when the refractive index is exactly known, the reconstructions are quite stable when the number of point sources is small. However, the reconstructions become too sensitive to algorithmic parameters when the number of point sources gets large.

We also want to emphasize that it is important to impose the constraints on the separability of the point sources in the numerical simulations. In other words, we have to explicitly ensure that the point sources to be reconstructed are far away from each other. Even in this case, the reconstructions are sensitive to the initial guess of the locations of the point sources. The objective function that we minimize to reconstruct the point sources can not differentiate between the true point sources and the equivalent class of re-labeled point sources. Therefore, the minimization algorithm could be easily fooled to jump between different intermediate configurations if the point sources are not well-separated. To further illustrate on this issue, we plot in Figure~\ref{FIG:Sentitivity Obj} the (normalized) objective functional $\Psi$ defined in~\eqref{EQ:Obj Src} as a function of the location of a single point source (the intensity of the source being assumed known). The true point source is located at  $(0.443, 0.298)$. While it is clear from the plot that the objective function is convex with respect to the location of the point source when $k=5$, this is not true anymore when $k=8$ and $k=12$. In the case of $k=8$, two local minmizers emerge at $y\approx 0.85$. More local minimizers emerge when $k=12$. These plots show that even in the case of a single point source, when the initial guess is far from the true position, the minimization algorithm could return wrong reconstructions. We can not visualize this phenomenon in the case of more than one point source. However, one can easily imagine that the situation would be far worse in that scenario.
\begin{figure}[htb!]
	\begin{center}
		\includegraphics[width=0.30\textwidth]{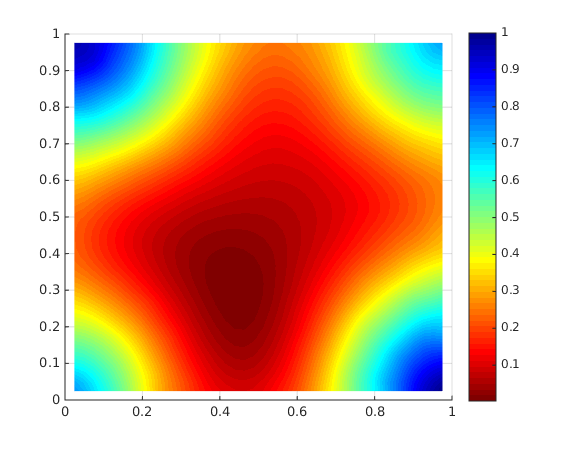}\,
		\includegraphics[width=0.33\textwidth]{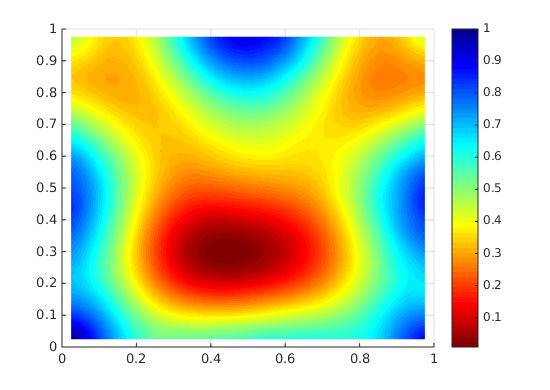}\,
		\includegraphics[width=0.315\textwidth]{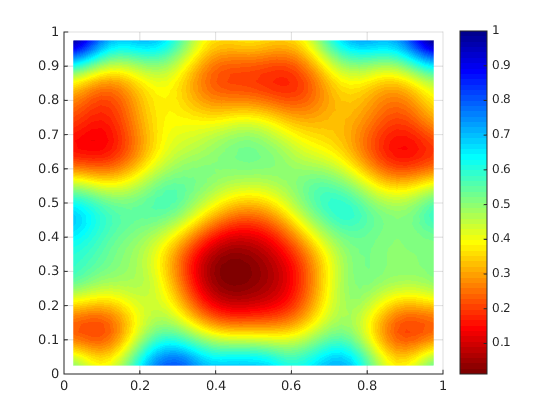}
\caption{Normalized objective function $\Psi$, defined in ~\eqref{EQ:Obj Src}, as a function of the location of a single point source. The true location is at  $(0.443, 0.298)$. Shown from left to right are the three cases of $k=5$, $k=8$ and $k=12$ respectively.}		
	\end{center}
\label{FIG:Sentitivity Obj}
\end{figure}

\paragraph{Experiment 2 [Recovery in an Unknown Environment].} In the second set of simulations, we reconstruct point sources in medium~\eqref{EQ:Med 1} assuming that both the medium and the point sources are not known. The reconstructions of the medium and the locations of the sources are shown in Figure~\ref{FIG:Unknown Media 1} and the reconstructed intensities of the point sources are summarized in the third row of Table~\ref{TAB:Intensities}.
\begin{figure}[!htb]
	\begin{center}
		\includegraphics[height=0.18\textwidth]{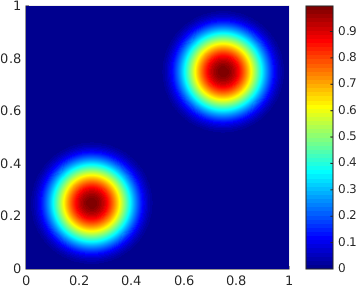}
		\includegraphics[height=0.18\textwidth]{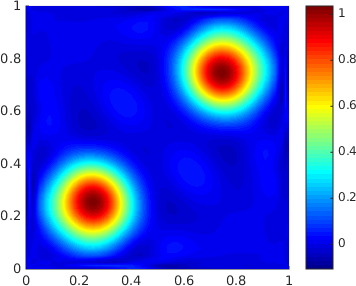}
		\includegraphics[height=0.18\textwidth]{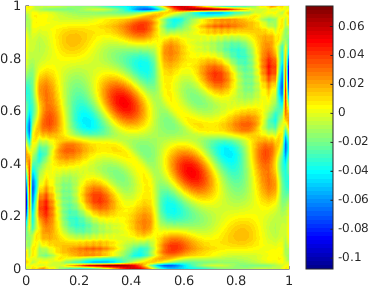}
		\includegraphics[height=0.18\textwidth]{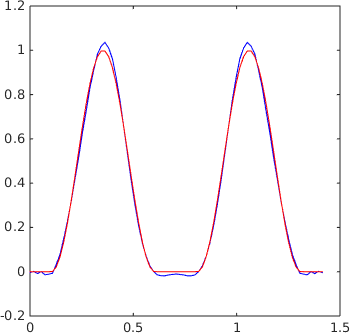}\\
		\includegraphics[scale=0.5]{../Figures/d1u}
		\includegraphics[scale=0.5]{../Figures/d5u}
	\end{center}
\caption{Simultaneous reconstruction of the point sources and the refractive index $n(\bx)$ in~\eqref{EQ:Med 1}. Top row: from left to right are the true $n(\bx)$, $n(\bx)$ reconstructed with data containing $1\%$ random noise, the difference between the true and the reconstructed $n(\bx)$, and the cross section of $n(\bx)$ along the diagonal (red and blue lines are for the true and the reconstruction respectively). Bottom row: true (crosses: $\times$) and reconstructed (circles: $\circ$) locations of the point sources using data with $1\%$ (left) and $5\%$ (right) random noise. }
\label{FIG:Unknown Media 1}
\end{figure}
Let us emphasize again that the reconstruction here is done in two steps. In the first step, we reconstruct the refractive index using multiple differential data sets. In the second step, we fix the refractive index, which is the reconstructed one, and reconstruct the point sources from \emph{one} Cauchy data set. If we compare the reconstructions in Figure~\ref{FIG:Unknown Media 1} with those in Figure~\ref{FIG:Known Med 1} (which are reconstructed under the true medium $n$ in~\eqref{EQ:Med 1}), and the reconstructions of intensities in the third row of Table.~\ref{TAB:Intensities} with those in the first row of the same table, we see that the reconstructions of the point sources are different but are of similar quality. That is,  smooth changes in the refractive index  introduces relatively small error in the reconstruction of point sources. This confirms our stability result in Theorem~\ref{THM:Stab Medium}.

\paragraph{Experiment 3 [Recovery in an Unknown Environment].} We repeat here the simulations in Experiment 2 for the medium~\eqref{EQ:Med 2}. The reconstructions of the medium and the locations of the sources are shown in Figure~\ref{FIG:Unknown Media 2} and the reconstructed intensities of the point sources are summarized in the last row of Table~\ref{TAB:Intensities}.
\begin{figure}[!htb]
	\begin{center}
	\includegraphics[height=0.18\textwidth]{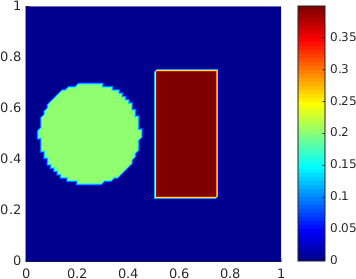}
	\includegraphics[height=0.18\textwidth]{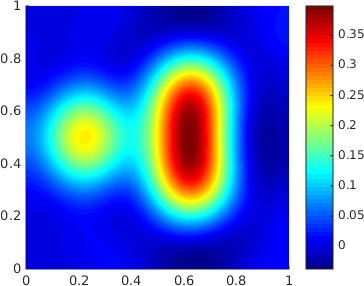}
	\includegraphics[height=0.18\textwidth]{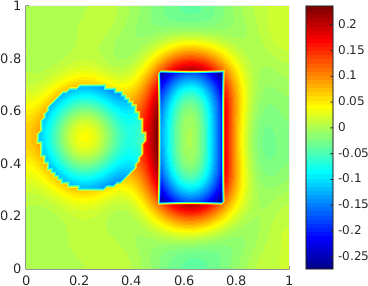}
	\includegraphics[height=0.18\textwidth]{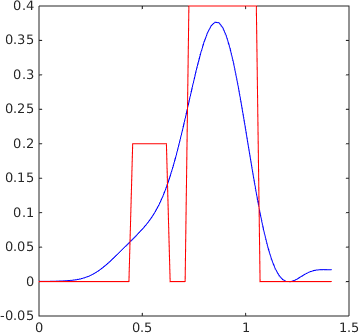}\\
	\includegraphics[scale=0.5]{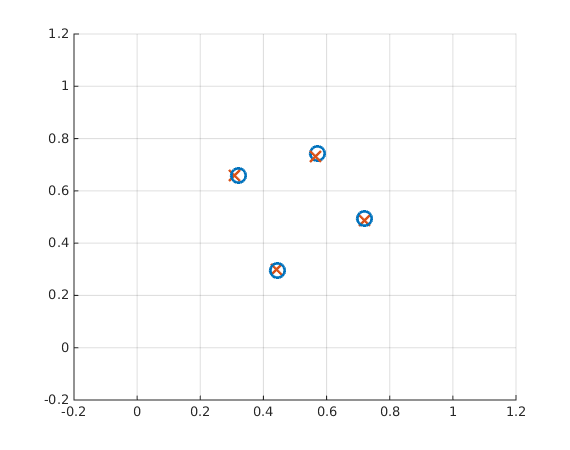}
	\includegraphics[scale=0.5]{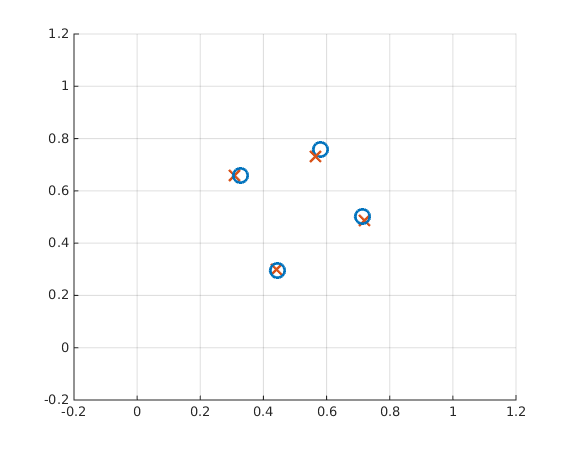}
	\end{center}
	\caption{Simultaneous reconstruction of the point sources and the refractive index $n(\bx)$ in~\eqref{EQ:Med 2}. Top row: from left to right are the true $n(\bx)$, $n(\bx)$ reconstructed with data containing $1\%$ random noise, the difference between the true and the reconstructed $n(\bx)$, and the cross section of $n(\bx)$ along the diagonal (red and blue lines are for the true and the reconstruction respectively). Bottom row: true (crosses: $\times$) and reconstructed (circles: $\circ$) locations of the point sources using data with $1\%$ (left) and $5\%$ (right) random noise.}
	\label{FIG:Unknown Media 2}
\end{figure}
The numerical results in this experiment again confirms the stability result in Theorem~\ref{THM:Stab Medium}. This can be seen by comparing the reconstructions in Figure~\ref{FIG:Unknown Media 2} with those in Figure~\ref{FIG:Known Med 2}, and the reconstructions of intensities in the fourth row of Table.~\ref{TAB:Intensities} with those in the second row of the same table.

The results in Experiment 2 and Experiment 3 demonstrate that smooth uncertainty in the medium produces relatively small errors in the reconstructions of the point sources. In other words, if we collect data from a medium that we know only approximately, we can simply perform reconstructions using our best known approximation to the medium. The results are not very different from those obtained using the true medium.

\section{Concluding remarks}
\label{SEC:Concl}

In this short paper, we studied, both theoretically and numerically, the reconstruction of point sources in heterogeneous media from boundary Cauchy data. 

Our first result is derived when the underlying medium is known. This is on the stability of the location and intensity reconstructions with respect to noise in the Cauchy data. This is a generalization of the results of El Badia and El Hajj in~\cite{ElEl-CRASP12} for the same reconstructions but in homogeneous media. Our numerical simulations confirm the theoretical predictions. More precisely, when only a very small number of point sources are to be reconstructed, numerical experiments suggest that they can be relatively stably recovered when the medium is known.

The motivation for our second result is to see how stable an imaging result, which could be the imaging of a point source as in our case, or a point scatter~\cite{AmKa-IP03,AmMoVo-ESAIM03}, or a reflector~\cite{FoGaPaSo-Book07}, or an extended target~\cite{BaCaLiRe-IP07,BaRe-SIAM08,Garnier-SIAM05,Zhao-SIAM04}, is with respect to uncertainties in the medium properties. This is an important problem to be addressed since in most applications, the underlying media are either assumed known or have to be reconstructed as well. In either case, targets are imaged with medium properties that are not the true medium properties. We established a stability result on the reconstruction of point sources with respect to smooth changes of the medium. This result says that if the medium is known up to a small (smooth) error, one can hope that the reconstructions are close to the true reconstructions. Numerical experiments show that even in the complicated case of simultaneous reconstructions of the refractive index and the point sources, the location of the sources can often be reconstructed in a robust way, indicating that the error caused by the uncertainty in the medium property, i.e. the refractive index, is relatively small.  More quantitative characterization of the uncertainty in the reconstructions needs to be performed, for instance, following the ideas presented in~\cite{ReVa-Prep18} in the context of photoacoustic imaging.

\section*{Acknowledgments}

We would like to thank Professor Abdellatif El Badia for useful discussion on algebraic methods for reconstructing point sources. This work is partially supported by the National Science Foundation through grant DMS-1620473.

{\small 

}

\end{document}